\documentclass{article}
\usepackage{amsmath,amsthm}


\usepackage{graphicx}
\usepackage{xargs}
\usepackage[active]{srcltx}

\usepackage[shortlabels]{enumitem} 

\usepackage[utf8]{inputenc}
\usepackage[english]{babel}
\usepackage{latexsym}
\usepackage{amssymb}
\usepackage{upgreek}
\usepackage{bbm}
\usepackage{mathrsfs}
\usepackage[norelsize,ruled,vlined]{algorithm2e}
\usepackage{amsmath}
\usepackage[textwidth=4cm,textsize=footnotesize]{todonotes}
\usepackage{accents}
\usepackage{dsfont}

\usepackage{aliascnt}
\usepackage{cleveref}
\makeatletter
\newtheorem{theorem}{Theorem}
\crefname{theorem}{theorem}{Theorems}
\Crefname{Theorem}{Theorem}{Theorems}

\newaliascnt{lemma}{theorem}
\newtheorem{lemma}[lemma]{Lemma}
\aliascntresetthe{lemma}
\crefname{lemma}{lemma}{lemmas}
\Crefname{Lemma}{Lemma}{Lemmas}

\newaliascnt{corollary}{theorem}

\aliascntresetthe{corollary}
\crefname{corollary}{corollary}{corollaries}
\Crefname{Corollary}{Corollary}{Corollaries}

\newaliascnt{proposition}{theorem}

\aliascntresetthe{proposition}
\crefname{proposition}{proposition}{propositions}
\Crefname{Proposition}{Proposition}{Propositions}

\newaliascnt{definition}{theorem}

\aliascntresetthe{definition}
\crefname{definition}{definition}{definitions}
\Crefname{Definition}{Definition}{Definitions}

\newaliascnt{definitionProposition}{theorem}

\aliascntresetthe{definitionProposition}
\crefname{Proposition and Definition}{Proposition and Definition}{Proposition and Definition}
\Crefname{Proposition and Definition}{Proposition and Definition}{Proposition and Definition}

\newaliascnt{remark}{theorem}
\newtheorem{remark}[remark]{Remark}
\aliascntresetthe{remark}
\crefname{remark}{remark}{remarks}
\Crefname{Remark}{Remark}{Remarks}

\crefname{example}{example}{examples}
\Crefname{Example}{Example}{Examples}

\crefname{figure}{figure}{figures}
\Crefname{Figure}{Figure}{Figures}


\Crefname{assumption}{\textbf{H}\hspace{-3pt}}{\textbf{H}\hspace{-3pt}}
\crefname{assumption}{\textbf{H}}{\textbf{H}}

\Crefname{assumptionF}{\textbf{F}\hspace{-3pt}}{\textbf{F}\hspace{-3pt}}
\crefname{assumptionF}{\textbf{F}}{\textbf{F}}

\Crefname{assumptionB}{\textbf{B}\hspace{-3pt}}{\textbf{B}\hspace{-3pt}}
\crefname{assumptionB}{\textbf{B}}{\textbf{B}}

\Crefname{assumptionM}{\textbf{M}\hspace{-3pt}}{\textbf{M}\hspace{-3pt}}
\crefname{assumptionM}{\textbf{M}}{\textbf{M}}












\newcommand{\eqsp}{\;}


\newcommand{\bB}{\mathbb{B}}
\newcommand{\Dbb}{\mathbb{D}}
\newcommand{\Pbb}{\mathbb{P}}
\newcommand{\Ebb}{\mathbb{E}}
\newcommand{\Nbb}{\mathbb{N}}
\newcommand{\Zbb}{\mathbb{Z}}
\newcommand{\R}{\mathbb{R}}

\newcommand{\cB}{\mathsf{B}}
\newcommand{\cC}{\mathsf{C}}
\newcommand{\cF}{\mathscr{F}}
\newcommand{\cX}{\mathscr{X}}


\def\Rbb{\mathbb{R}}

\newcommand{\sX}{\mathsf{X}}



\newcommand\as{a.s. }








\def\tilde{\widetilde} 
\def\rme{\mathrm{e}}
\def\rmd{\mathrm{d}}
\def\Xsigma{\cX}
\def\Xset{\sX}
\newcommandx{\indi}[2][1=]{\mathbbm{1}^{#1}_{#2}}
\newcommand{\indiacc}[1]{\mathbbm{1}_{\{#1\}}}

\newcommandx\sequence[3][2=,3=]
{\ifthenelse{\equal{#3}{}}{\ensuremath{\{ #1_{#2}\}}}{\ensuremath{\{ #1_{#2}, \eqsp #2 \in #3 \}}}}
\newcommandx{\sequencen}[2][2=n\in\N]{\ensuremath{\{ #1, \eqsp #2 \}}}
\newcommandx\sequenceDouble[4][3=,4=]
{\ifthenelse{\equal{#3}{}}{\ensuremath{\{ (#1_{#3},#2_{#3}) \}}}{\ensuremath{\{  (#1_{#3},#2_{#3}), \eqsp #3 \in #4 \}}}}
\newcommandx{\sequencenDouble}[3][3=n\in\N]{\ensuremath{\{ (#1_{n},#2_{n}), \eqsp #3 \}}}

\newcommand{\lrb}[1]{\left[ #1 \right]}
\newcommand{\lrcb}[1]{\left\{ #1 \right\}}
\newcommandx{\chunk}[3][1=0,3=n-1]{{#2}_{#1}^{#3}}
\newcommandx{\tvdist}[3][1=]{\ensuremath{\mathrm{d}^{#1}_{\mathrm{TV}}}(#2,#3)}
\newcommand{\tvdistsym}{\ensuremath{\mathrm{d}_{\mathrm{TV}}}}

\newcommandx{\CPE}[3][1=]{{\mathbb E}_{#1}\left[\left. #2 \, \right| #3 \right]} 
\newcommand{\functionboundeddiff}[2]{\mathbb{BD}(#1,#2)}
\newcommandx\supnorm[2][1=]{| #2 |^{#1}_\infty}
\newcommand{\fracaa}[2]{#1 / #2}

\usepackage[toc,page]{appendix} 
\usepackage{tikz}

\setcounter{secnumdepth}{3} 

\begin{document}
\title{A quantitative Mc Diarmid's inequality for geometrically ergodic Markov chains}
\author{A. Havet, M. Lerasle, E. Moulines and E. Vernet}
\maketitle

\begin{abstract}
 We state and prove a quantitative version of the bounded difference inequality for geometrically ergodic Markov chains.
 Our proof uses the same martingale decomposition as \cite{MR3407208} but, compared to this paper, the exact coupling argument is modified to fill a gap between the strongly aperiodic case and the general aperiodic case.
\end{abstract}

\noindent
{\small {\bf Keywords:} Concentration inequalities ; Markov chains ; Geometric ergodicity ; Coupling.}

\noindent
{\small {\bf AMS MSC 2010:} 60J05; 60E15.}

\section{Introduction}
The purpose of this note is to establish a quantitative version of Mc Diarmid's inequality for geometrically ergodic Markov chains.
Let $X_0,\ldots,X_{n-1}$ denote independent random variables taking values in a measurable space $(\sX,\cX)$ and $c=(c_0,\ldots,c_{n-1})$ denote a vector of non-negative real numbers. A function  $f:\sX^n\to\Rbb$ satisfies the bounded difference inequality if for all $x=(x_0,\ldots,x_{n-1})$ and
$y=(y_0,\ldots,y_{n-1})\in \sX^{n}$, we have
\begin{equation}\label{def:BDP}
|f(x)-f(y)|\leqslant \sum_{i=0}^{n-1}c_i\indiacc{x_i\ne y_i}\eqsp.
\end{equation}
The bounded difference inequality, first established in \cite{MR1036755}, shows that for all $t > 0$,
\[
\Pbb\big(f(X_0,\ldots,X_{n-1})-\Ebb[f(X_0,\ldots,X_{n-1})]>t\big)\leqslant \rme^{-2t^2/\|c\|^2}\eqsp,
\]
where $\|c\|^2=\sum_{i=0}^{n-1}c_i^2$.
Several attempts have been made to extend this result to Markov chains.
In \cite{MR2424985}, the concentration of particular functionals of the form $f(x_0,\ldots,x_{n-1})=\sup_{g\in \cF}\sum_{i=0}^{n-1}g(x_i)$, for centered functions $g$ in a class $\cF$ is established.
The concentration of general functionals (satisfying \eqref{def:BDP}) of geometrically ergodic Markov chains was established in \cite{MR3407208}, where it is also proved that geometric ergodicity is a necessary assumption. However, the result in \cite{MR3407208} is not quantitative.
It states that for all geometrically recurrent set $C$, there exists a constant $\beta$, depending on $C$ such that for all $x \in C$ and $t > 0$,
\begin{equation}
\label{eq:mcdiarmid-markov}
\Pbb_x\big(f(X_0,\ldots,X_{n-1})-\Ebb_x[f(X_0,\ldots,X_{n-1})]>t\big)\leqslant \rme^{-\beta t^2/\|c\|^2}\eqsp,
\end{equation}
where for any $x\in\Xsigma$, $\Pbb_x$ is the distribution of the Markov chain $\{X_k\}_{k=0}^\infty$ starting from $x$ (see the precise definition below).
In many  applications, it is necessary to get the explicit dependence of the constant $\beta$ as a function of the set $C$.
In particular, this problem arises when establishing posterior concentration rates of Bayesian non-parametric estimators; see for example \cite{rousseau:2016,ghosal:vandervaart:2017} for recent accounts on this theory.
To extend these results to Markovian settings, the result of \cite{MR3407208} cannot be applied directly and a quantitative version of  \eqref{eq:mcdiarmid-markov} is required, where the dependence of $\beta$ on constants characterizing the mixing of the Markov chain is needed; see for example \cite{vernet:2015,lecorff:lerasle:vernet:2018}.

A quantitative version of Mc Diarmid's inequality for Markov chains was established in \cite{paulin:2015}, where the constant $\beta$ depends here explicitly on the mixing time of the chain.
The existence of finite mixing times requires \emph{uniform} ergodicity of the chain, see for example \cite[Section 3.3]{MR2095565}, an  assumption that typically fails when the chain takes value in general state spaces.
In this note, we prove an extension of Mc Diarmid's inequality to geometrically ergodic Markov chains.
Our proof is based on  \cite{MR3407208},
but avoids the use of \cite[Lemma~6]{MR3407208} which requires the construction of an exact coupling.
Exact coupling can actually be built in the strongly aperiodic case but there is a gap in the general aperiodic case.

The remaining of the paper is decomposed as follows, Section~\ref{sec:Setting} introduces formally the notations and the assumptions of the main result, which is stated and proved in Section~\ref{sec:Main}.

\section{Notations and assumptions}\label{sec:Setting}
Let $(\sX,\cX)$ be a measurable space.
We denote by $\tvdistsym$  the total variation distance between probability measures.
For any sequence $x=\sequence{x}[n][\Nbb]$ and any non-negative integers $a$ and $b$, with $a\leqslant b$, let $\chunk[a]{x}[b]=(x_a,x_{a+1},\ldots,x_b)$.
For any $n\geqslant 0$ and any vector $c=\chunk[0]{c}[n-1]\in \R^n$, let $\|c\|$ denote the Euclidean norm of $c$ and $\|c\|_\infty=\max_{0\leqslant i\leqslant n-1}|c_i|$ denote its sup-norm.

We denote by $(\sX^{\Zbb_+},\cX^{\otimes\Zbb_+}, (\cF_k)_{k\geqslant 0})$ the canonical filtered space,
$\{X_n\}_{n=0}^\infty$ the canonical process and $\theta: \sX^{\Zbb_+}\to\sX^{\Zbb_+}$  the shift operator on the canonical space defined, for any $x=(x_{n})_{n\geqslant 0}\in \sX^{\Zbb_+}$ by $\theta(x)\in \sX^{\Zbb_+}$, where, for any $n\geqslant 0$, $\theta(x)_n=x_{n+1}$.
Set $\theta_1=\theta$ and for $n\in \Nbb^*$, define inductively, $\theta_n=\theta_{n-1} \circ \theta$. We also need to define $\theta_\infty$. To this aim, fix an arbitrary $x^*\in \Xset$, we define $\theta_\infty:\Xset^\Nbb \to \Xset^\Nbb$ such that for $z=\sequence{z}[k][\Nbb] \in \Xset^\Nbb$, $\theta_\infty z\in \Xset^\Nbb$ is the constant sequence  $(\theta_\infty z)_k=x^*$ for all $k \in \Nbb$.

Let $P$ be a Markov kernel on $\sX \times \cX$.
For any probability measure $\xi$ on $(\Xset,\Xsigma)$, denote by $\Pbb_{\xi}$  the unique probability under which $(X_n)_{n\geqslant 0}$ is a Markov chain with Markov kernel $P$ and initial distribution $\xi$ and let $\Ebb_{\xi}$ denote the expectation under the distribution $\Pbb_{\xi}$.
Recall that $\cF_n$ denotes the $\sigma$-algebra generated by $X_0, \ldots,X_n$.
For any $x\in \sX$, let $\delta_x$ denote the Dirac mass at point $x$.
With some abuse of notation, we also denote $\Pbb_x$ (resp. $\Ebb_x$) instead of $\Pbb_{\delta_x}$ (resp. $\Ebb_{\delta_x}$).

For any $\cB\in \cX$ and any integer $i\geqslant 0$, let
\[
\tau_{\cB}^i=\inf\{n\geqslant i: X_n\in \cB\}=i+\tau_{\cB}^0\circ \theta^i
\qquad \text{and} \qquad
\sigma_{\cB}=\tau_{\cB}^1=1+\tau_{\cB}^0\circ \theta\eqsp.
\]
For $c = \chunk[0]{c}[n-1] \in \R_+^n$, we denote by  $\functionboundeddiff{\sX^n}{c}$  the set of measurable functions $f:\sX^n\to\R$ such that for all $x= (x_0,\dots,x_{n-1})$ and $y= (y_0,\dots,y_{n-1})$, $|f(x)-f(y)|\leqslant \sum_{i=0}^{n-1} c_i\indiacc{x_i\ne y_i}$
The main result is established under the following conditions.
\begin{itemize}
\item[{\bf H1}] The Markov kernel $P$ is irreducible and aperiodic, with unique invariant probability $\pi$.
\item[{\bf H2}] There exist a non-empty set $\cC \in \cX$ and two real numbers $u>1$ and $M>0$ such that
  \[\sup_{x\in \cC}\Ebb_x[u^{\sigma_{\cC}}]\leqslant M\eqsp.\]
\item[{\bf H3}] There exist $r\in (0,1)$ and $L\geqslant 1$ such that, for any $x$ in the set $\cC$ of {\bf H2} and any $n\geqslant 0$,
\[
\tvdist{\delta_xP^n}{\pi} \leqslant Lr^n\eqsp,
\]
where $\pi$ is the unique invariant measure granted in {\bf H1}.
\end{itemize}
When the Markov kernel $P$ is uniformly ergodic, then {\bf H3} holds with $\cC=\sX$. The following Lemma is a coupling result that replaces \cite[Lemma~6]{MR3407208}.
It is instrumental in the sequel.
\begin{lemma}\label{lem:DMPS18}
 For any probability measures $\xi$ and $\xi'$ on $(\sX,\cX)$, any $n\geqslant 1$, any $c\in \R_+^n$ and any $h\in \bB\Dbb(\sX^n,c)$,
\begin{equation*}\label{eq:Step1}
 |\Ebb_\xi[h(\chunk[0]{X}[n-1])]-\Ebb_{\xi'}[h(\chunk[0]{X}[n-1])]|\leqslant 2\sum_{i=0}^{n-1}c_i \tvdist{\xi P^i}{\xi'P^i}\eqsp.
\end{equation*}
\end{lemma}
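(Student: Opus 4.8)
The plan is to reduce the estimate to a telescoping sum over coordinates, each term of which compares the one-dimensional marginals $\xi P^i$ and $\xi'P^i$ through a single bounded function; the factor $2$ will appear for free because a bounded-difference increment is a function bounded by $c_i$ \emph{in absolute value}, hence of oscillation at most $2c_i$. No property of $P$ beyond being a Markov kernel is needed, which is consistent with the lemma being used as a tool.

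Concretely, I would fix a reference point $x^\star\in\sX$ (the one already used to define $\theta_\infty$) and, for $0\leqslant i\leqslant n$, set $h^{(i)}:\sX^{n-i}\to\R$, $h^{(i)}(x_i,\dots,x_{n-1})=h(x^\star,\dots,x^\star,x_i,\dots,x_{n-1})$ with the first $i$ entries equal to $x^\star$; thus $h^{(0)}=h$ and $h^{(n)}$ is a constant. Put $\Delta_i(x_i,\dots,x_{n-1})=h^{(i)}(x_i,\dots,x_{n-1})-h^{(i+1)}(x_{i+1},\dots,x_{n-1})$. The two arguments of $h$ inside this difference agree in every coordinate except the $i$-th (where they are $x_i$ and $x^\star$), so $h\in\functionboundeddiff{\sX^n}{c}$ gives $|\Delta_i|\leqslant c_i$ everywhere. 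Telescoping, $h=h^{(n)}+\sum_{i=0}^{n-1}\Delta_i$, and the constant cancels, so
\[
\Ebb_\xi[h(\chunk[0]{X}[n-1])]-\Ebb_{\xi'}[h(\chunk[0]{X}[n-1])]=\sum_{i=0}^{n-1}\big(\Ebb_\xi[\Delta_i(\chunk[i]{X}[n-1])]-\Ebb_{\xi'}[\Delta_i(\chunk[i]{X}[n-1])]\big).
\]
For fixed $i$, I would condition on $X_i$ and use the Markov property: under $\Pbb_\xi$, conditionally on $X_i=a$, the block $(X_i,\dots,X_{n-1})$ is distributed as $(X_0,\dots,X_{n-1-i})$ under $\Pbb_a$. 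Hence, setting $\Phi_i(a):=\Ebb_a[\Delta_i(\chunk[0]{X}[n-1-i])]$, one gets $\Ebb_\xi[\Delta_i(\chunk[i]{X}[n-1])]=\int\Phi_i\,\drm(\xi P^i)$ and likewise for $\xi'$, so the $i$-th term is $\int\Phi_i\,\drm(\xi P^i)-\int\Phi_i\,\drm(\xi'P^i)$. Since $|\Delta_i|\leqslant c_i$ pointwise, $\Phi_i$ takes values in $[-c_i,c_i]$, so its oscillation is at most $2c_i$; combined with the elementary inequality $|\mu(g)-\nu(g)|\leqslant(\sup g-\inf g)\,\tvdistsym(\mu,\nu)$ for bounded measurable $g$ (recenter $g$ and use the layer-cake formula; equivalently, couple $\xi P^i$ and $\xi'P^i$ maximally), this gives $|\int\Phi_i\,\drm(\xi P^i)-\int\Phi_i\,\drm(\xi'P^i)|\leqslant 2c_i\,\tvdist{\xi P^i}{\xi'P^i}$. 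Summing over $i$ yields the claim.

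I do not expect a serious obstacle here; the only points needing care are the index bookkeeping in the telescoping (checking that $\Delta_i$ depends on $x_i,\dots,x_{n-1}$ only and that the difference defining it alters exactly one coordinate), the measurability of $a\mapsto\Phi_i(a)$, and the normalisation convention for $\tvdistsym$ — it is precisely $\tvdistsym(\mu,\nu)=\sup_{A\in\cX}|\mu(A)-\nu(A)|$ that makes the constant $2$ come out, so I would state that convention explicitly at the start.
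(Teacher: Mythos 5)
Your proof is correct and is essentially identical to the paper's: the same telescoping decomposition via a reference point $x^\star$, the same conditioning on $X_i$ to produce a function $\Phi_i$ (the paper's $\bar{w}_i$) bounded by $c_i$, and the same total-variation bound yielding the factor $2$. Nothing further is needed.
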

\begin{remark}
 It is possible to avoid the factor $2$ in \eqref{eq:Step1} under additional technical conditions, for example, when there exists a maximal coupling for $(\Pbb_\xi,\Pbb_{\xi'})$, see \cite[Lemma 23.2.1]{douc:moulines:priouret:soulier:2018}.
\end{remark}
\begin{proof}
Fix an arbitrary
$x^* \in \Xset$. For $i \in \{1,\ldots,n-1\}$, we set $\bar{h}_i(\chunk[i]{x}[n-1])=h(x^*,\dots,x^*,\chunk[i]{x}[n-1])$. By convention,
we set $\bar{h}_n$ the constant function $\bar{h}_n=h(x^*,\ldots,x^*)$ and $\bar{h}_0=h$. With these
notations, we have the decomposition
\[
h(\chunk{x})=\sum_{i=0}^{n-1} \{\bar{h}_i(\chunk[i]{x}[n-1]) - \bar{h}_{i+1}(\chunk[i+1]{x}[n-1]) \} +\bar{h}_n \eqsp.
\]
For all $i \in \{0,\ldots,n-1\}$ and all $x_i \in \Xset$, let
\begin{align}
\bar{w}_i(x_i)&=\int \lrcb{\bar{h}_i(\chunk[i]{x}[n-1])-\bar{h}_{i+1}(\chunk[i+1]{x}[n-1])}  \prod_{\ell=i+1}^{n-1} P(x_{\ell-1}, \rmd x_{\ell})\eqsp, \nonumber \\
&=\int \lrcb{h(x^*,\ldots,x^*,\chunk[i]{x})-h(x^*,\ldots,x^*,\chunk[i+1]{x})}  \prod_{\ell=i+1}^{n-1} P(x_{\ell-1}, \rmd x_{\ell}) \eqsp. \label{eq:delta:diarmid:fond}
\end{align}
It is easily seen that $\CPE{\{\bar{h}_i(\chunk[i]{X}[n-1])-\bar{h}_{i+1}(\chunk[i+1]{X}[n-1])\}}{\cF_{i}}=\bar{w}_i(X_i)$, $\Pbb_\xi-\as$,  which implies that
\[
\Ebb_\xi \lrb{h(\chunk{X})}=\sum_{i=0}^{n-1} \xi P^i \bar{w}_i + \bar{h}_n \eqsp.
\]
Since  $h \in\functionboundeddiff{\Xset^n}{c}$, \eqref{eq:delta:diarmid:fond} shows that  $\supnorm{\bar{w}_i} \leq c_i$.
Therefore,
\[
|\Ebb_\xi \lrb{h(X^{n-1})}-\Ebb_{\xi'} \lrb{h(X^{n-1})} |\\
\leq \sum_{i=0}^{n-1} |\xi P^i \bar{w}_i-\xi' P^i \bar{w}_i| \leq 2 \sum_{i=0}^{n-1} c_{i}\tvdist{\xi P^{i}}{\xi' P^{i}} \eqsp.
\]
\end{proof}

\section{Main result}\label{sec:Main}
The main result of this paper  is the following quantitative version of Mac Diarmid's inequality for geometrically ergodic Markov chains.
\begin{theorem}\label{thm:ConcMarkQuant}
 Assume {\bf H1}, {\bf H2}, {\bf H3}. Let $n\geqslant 1$, $c\in \R^n$ and  $f\in \functionboundeddiff{\sX^n}{c}$. Then, for all $x\in \cC$ and $t > 0$,
 \[
\Pbb_x\big(f( \chunk[0]{X}[n-1])-\Ebb_x[f(\chunk[0]{X}[n-1])]>t\big)\leqslant \exp\bigg(-\frac{\beta t^2}{\|c\|^2}\bigg)\eqsp,
 \]
where $\beta$ is given by
 \[
 \beta=\frac{(1-r\vee u^{-1/4})^2}{16L}\bigg(\frac5{\log u}+4ML\bigg)^{-1}\eqsp.
 \]
\end{theorem}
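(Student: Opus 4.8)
The plan is to bound the logarithmic moment generating function of $Z:=f(\chunk[0]{X}[n-1])-\Ebb_x[f(\chunk[0]{X}[n-1])]$ under $\Pbb_x$ and then conclude by a Chernoff bound: it suffices to prove $\Ebb_x[\rme^{\lambda Z}]\le\exp(\lambda^2\|c\|^2/(4\beta))$ for every $\lambda>0$, since minimizing $\rme^{-\lambda t}\Ebb_x[\rme^{\lambda Z}]$ over $\lambda>0$ then yields the claim. To estimate this moment generating function I would use, as in \cite{MR3407208}, the Doob martingale decomposition $Z=\sum_{k=0}^{n-1}D_k$ with $D_k=\Ebb_x[f\mid\cF_k]-\Ebb_x[f\mid\cF_{k-1}]$ ($\cF_{-1}$ trivial, so $D_0=0$). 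Writing $g_k(x_0,\dots,x_k)=\int f(x_0,\dots,x_{n-1})\prod_{\ell=k+1}^{n-1}P(x_{\ell-1},\rmd x_\ell)$ and $h_k(\cdot)=g_k(\chunk[0]{X}[k-1],\cdot)$, which is $\cF_{k-1}$-measurable, one gets $D_k=h_k(X_k)-\int h_k(y)\,P(X_{k-1},\rmd y)$, so $|D_k|$ is at most the oscillation $R_k$ of $h_k$ over $\mathrm{supp}(P(X_{k-1},\cdot))$, an $\cF_{k-1}$-measurable quantity that is finite ($R_k\le\sum_{i\ge k}c_i$ because $h_k$ is an average of values of $f$). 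By Hoeffding's lemma $\Ebb_x[\rme^{\lambda D_k}\mid\cF_{k-1}]\le\exp(\lambda^2R_k^2/8)$, hence $\prod_{j\le k}\exp(\lambda D_j-\lambda^2R_j^2/8)$ is a supermartingale and $\Ebb_x[\exp(\lambda Z-\tfrac{\lambda^2}{8}\sum_kR_k^2)]\le1$; since the $R_k$ are merely $\cF_{k-1}$-measurable (not deterministic), a Cauchy--Schwarz/Hölder split is needed to trade this for a bound on $\Ebb_x[\rme^{\lambda Z}]$, at the price of passing to a constant multiple of $\tfrac{\lambda^2}{8}\sum_kR_k^2$ — this is where the numerical factor $16$ (rather than $8$) originates. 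One is then left with controlling the exponential moment of (a constant multiple of) $\lambda^2\sum_kR_k^2$.

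The core step is to bound $R_k$ by a mixing-sensitive quantity. Freezing the $\cF_{k-1}$-measurable past, $y\mapsto h_k(y)$ is the integral against $\Pbb_y$ of a function in $\bB\Dbb(\sX^{n-k},(c_k,\dots,c_{n-1}))$, so Lemma~\ref{lem:DMPS18} gives, for all $y,y'$,
\[
|h_k(y)-h_k(y')|\le 2c_k\indiacc{y\ne y'}+2\sum_{i=1}^{n-1-k}c_{k+i}\,\big(\tvdist{\delta_yP^i}{\pi}+\tvdist{\delta_{y'}P^i}{\pi}\big)\eqsp.
\]
Since {\bf H3} only controls $\tvdist{\delta_zP^i}{\pi}$ for $z\in\cC$, I would decompose the trajectory started at $z$ at its first visit $\tau_\cC^0$ to $\cC$ and use the strong Markov property: on $\{\tau_\cC^0\le i\}$ one gains the factor $Lr^{i-\tau_\cC^0}$ from {\bf H3} at the hitting time, while $\{\tau_\cC^0>i\}$ is paid for by the exponential return-time moment of {\bf H2}. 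Splitting according to whether $\tau_\cC^0$ exceeds a fixed fraction of $i$ and absorbing the resulting polynomial factors through the elementary inequality $t\,u^{-\alpha t}\le(e\alpha\log u)^{-1}$ produces $\tvdist{\delta_zP^i}{\pi}\le\Phi(z)\,\rho^i$ with $\rho:=r\vee u^{-1/4}$ and $\Phi(z)$ controlled by $\Ebb_z[u^{\sigma_\cC}]$ (so $\Phi\lesssim ML$ on $\cC$, and $\Phi(X_{k-1})$ is governed by how long ago the chain last visited $\cC$). Plugging this back, $R_k\le 2c_k+2\big(\sum_{i\ge1}c_{k+i}\rho^i\big)\Psi_{k-1}$ with $\Psi_{k-1}$ an $\cF_{k-1}$-measurable excursion factor.

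It then remains to bound $\Ebb_x[\exp(\text{const}\cdot\lambda^2\sum_kR_k^2)]$. Using $(a+b)^2\le2a^2+2b^2$, the Cauchy--Schwarz bound $(\sum_ic_{k+i}\rho^i)^2\le\tfrac{\rho}{1-\rho}\sum_ic_{k+i}^2\rho^i$, and then summing over $k$ (each index receiving total geometric weight at most $(1-\rho)^{-1}$), one gets $\sum_kR_k^2\le A\|c\|^2+B\|c\|^2\cdot(\text{excursion term})$, the factor $(1-\rho)^{-2}$ coming from the two geometric summations. Here the hypothesis $x\in\cC$ is decisive: writing $0=T_0<T_1<\cdots$ for the successive visits of the chain to $\cC$, every excursion $(X_{T_{j-1}},\dots,X_{T_j})$ starts from $\cC$, so by the strong Markov property and {\bf H2} each excursion carries the exponential moment $M$ \emph{uniformly}; this lets one bound the exponential moment of the excursion term without any supremum over the (at most $n$) excursions, hence with no $n$-dependence, the contribution $5/\log u$ stemming from inequalities of the type $t\,u^{-\alpha t}\le(e\alpha\log u)^{-1}$ and $4ML$ from the constant $L$ of {\bf H3} times the return-time bound $M$ of {\bf H2}. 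Collecting the Hoeffding factor, the Cauchy--Schwarz/Hölder loss, the factor $L$, the two factors $(1-\rho)^{-1}$ and the excursion contribution yields $\Ebb_x[\rme^{\lambda Z}]\le\exp(\lambda^2\|c\|^2/(4\beta))$ with $\beta$ as stated, and the Chernoff bound finishes the proof.

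The hard part is the second and third steps. For a merely geometrically — not uniformly — ergodic chain the martingale increments are not uniformly small (only $\le\sum_ic_i$), so Azuma--Hoeffding with deterministic bounds is useless; in \cite{MR3407208} this was circumvented by an exact coupling (their Lemma~6), available under strong aperiodicity but not in the general aperiodic case. Replacing that coupling by the soft comparison of Lemma~\ref{lem:DMPS18} is what makes the argument go through, at the cost of the slowed rate $\rho=r\vee u^{-1/4}$ and of the delicate exponential-moment bookkeeping over excursions described above, which has to be organized so as to avoid any supremum over excursions and hence any dependence of $\beta$ on $n$.
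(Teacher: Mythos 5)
Your architecture --- Doob martingale with respect to the natural filtration $(\cF_k)$, conditional Hoeffding lemma, then an exponential-moment bound on $\sum_k R_k^2$ organized by excursions --- is genuinely different from the paper's, and it has a gap that I do not see how to close under {\bf H1}--{\bf H3}. The paper conditions instead on the \emph{stopped} $\sigma$-algebras $\cF_{\tau_{\cC}^i}$, so that each increment $G_i-G_{i-1}$ vanishes off the event $\{\tau_{\cC}^{i-1}=i-1\}=\{X_{i-1}\in\cC\}$ (its Fact~1); all mixing and return-time estimates are then invoked only at states in $\cC$, where {\bf H2} and {\bf H3} apply verbatim, and no extension of {\bf H3} off $\cC$ --- the step you describe as producing $\Phi(z)$ and $\Psi_{k-1}$ --- is ever needed.

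The gap is in the exponential-moment bookkeeping. Your $R_k$ carries the excursion factor $\Psi_{k-1}$, which on the event that the chain has not visited $\cC$ for $s$ steps is of order $\rho^{-s}=u^{s/4}$. Hoeffding's lemma puts $R_k^2$ \emph{inside} the exponential, so you must ultimately control quantities of the type $\Ebb\big[\exp\big(\epsilon\, u^{\sigma_{\cC}/2}\big)\big]$, a \emph{doubly} exponential moment of the return time, whereas {\bf H2} provides only a single exponential moment: if $\Pbb(\sigma_{\cC}>t)$ decays like $u^{-t}$, then $\Ebb[u^{\sigma_{\cC}}]<\infty$ while $\Ebb[\exp(\epsilon u^{\sigma_{\cC}/2})]=+\infty$ for every $\epsilon>0$. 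The paper avoids exactly this by replacing Hoeffding's lemma with $\rme^t\leqslant 1+t+t^2\rme^{|t|}$: the squared increment stays \emph{outside} the exponential, and the quantity to be integrated is $\rho^{-2\sigma_{\cC}}\rme^{C_1\|c\|_\infty\sigma_{\cC}}\leqslant u^{\sigma_{\cC}}$ (after forcing $\|c\|_\infty\leqslant\varepsilon=\log u/(2C_1)$), which is precisely what {\bf H2} controls. Two further points would need repair even if this were fixed: (i) $R_k$ is an oscillation, hence a \emph{supremum} of your $\Phi$ over the support of $P(X_{k-1},\cdot)$, while {\bf H2} only controls the \emph{average} of $\Ebb_y[u^{\tau_{\cC}^0}]$ under $P(x,\cdot)$ for $x\in\cC$; and (ii) the claim $\Ebb_x[\rme^{\lambda Z}]\leqslant\exp(\lambda^2\|c\|^2/(4\beta))$ for \emph{all} $\lambda>0$ cannot come out of such excursion estimates when $\lambda\|c\|_\infty$ is large without the truncation device of the paper's Fact~3 (replace $f$ by $\tilde f$ built from the coordinates with $c_i\leqslant\varepsilon$ and pay $\|c\|^2/\varepsilon$ for the difference); your sketch omits this step entirely.
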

\begin{proof}[Proof of \Cref{thm:ConcMarkQuant}]
Fix $c\in \Rbb^n$, $x\in\sX$ and $f\in \functionboundeddiff{\sX^n}{c}$.
Following \cite{MR3407208}, we decompose $f(\chunk[0]{X}[n-1])-\Ebb_x[f(\chunk[0]{X}[n-1])]$ into martingale increments by conditioning to the stopping times $\tau_{\cC}^i$, $i=0,\ldots, n-1$.
For any integer $i\in [0,n-1]$, define
\[
G_i=\Ebb_x\big[f(\chunk[0]{X}[n-1])|\cF_{\tau_{\cC}^i}\big]\eqsp.
\]
As $\tau_{\cC}^0=0$ $\Pbb_x$-a.s., it holds $\Ebb_x[f(\chunk[0]{X}[n-1])]=\Ebb_x[f(\chunk[0]{X}[n-1])|\cF_{\tau_{\cC}^0}]=G_0$.
Moreover, as $\tau_{\cC}^{n-1}\geqslant n-1$, it also holds $G_{n-1}=\Ebb_x[f(\chunk[0]{X}[n-1])|\cF_{\tau_{\cC}^{n-1}}]=f(\chunk[0]{X}[n-1])$.
Therefore, the difference $f(\chunk[0]{X}[n-1])-\Ebb_x[f(\chunk[0]{X}[n-1])]$ is decomposed into a sum of the martingale increments $G_{i+1}-G_i$ as follows
\begin{equation}\label{eq:LinkWithG}
 f(\chunk[0]{X}[n-1])-\Ebb_x[f(\chunk[0]{X}[n-1])]=G_{n-1}-G_0=\sum_{i=0}^{n-2}(G_{i+1}-G_i)\eqsp.
\end{equation}
The proof is now decomposed into three facts that aim at bounding the Laplace transform of $f(\chunk[0]{X}[n-1])-\Ebb_x[f(\chunk[0]{X}[n-1])]$.

\noindent
{\bf Fact 1.} \emph{For any $i\in \{1,\ldots,n-1\}$,}
\begin{equation}\label{eq:Fact1}
 G_i-G_{i-1}=(G_i-G_{i-1})\indiacc{\tau_{\cC}^{i-1}=i-1}\eqsp.
\end{equation}
\begin{proof}[Proof of Fact 1.] By definition $\tau_{\cC}^{i-1}\geqslant i-1$ and $\tau_{\cC}^{i-1}> i-1$ if and only if $\tau_{\cC}^{i-1}=\tau_{\cC}^{i}$.
 Therefore,
 \[
 G_i-G_{i-1}=(G_i-G_{i-1})\left(\indiacc{\tau_{\cC}^{i-1}=i-1}+\indiacc{\tau_{\cC}^{i-1}=\tau_{\cC}^{i}}\right)\eqsp.
 \]
To prove that $(G_i-G_{i-1})\indiacc{\tau_{\cC}^{i-1}=\tau_{\cC}^{i}}=0$, we decompose according to the values of $\tau_{\cC}^{i}$:
 \[
(G_i-G_{i-1})\indiacc{\tau_{\cC}^{i-1}=\tau_{\cC}^{i}}=\sum_{j\geqslant i} (G_i-G_{i-1})\indiacc{\tau_{\cC}^{i-1}=\tau_{\cC}^{i}=j}\eqsp.
 \]
Now, remark that, for any $i\geqslant 0$,
\begin{align}
 G_i\indiacc{\tau_{\cC}^{i}=j}=
\begin{cases}\label{eq:Gitaui=j}
 \Ebb_x\big[f(\chunk[0]{X}[n-1])|\cF_j\big]&\text{ if } j\leqslant n-2\eqsp,\\
 f(\chunk[0]{X}[n-1])&\text{ if } j\geqslant n-1\eqsp.
\end{cases}
\end{align}
Then, for any $j\geqslant i$,
\[
 G_i\indiacc{\tau_{\cC}^{i}=j}\indiacc{\tau_{\cC}^{i-1}=\tau_{\cC}^{i}}= G_{i-1}\indiacc{\tau_{\cC}^{i-1}=j}\indiacc{\tau_{\cC}^{i-1}=\tau_{\cC}^{i}}=G_{i-1}\indiacc{\tau_{\cC}^{i}=j}\indiacc{\tau_{\cC}^{i-1}=\tau_{\cC}^{i}}\eqsp.
\]
This proves Fact 1.
\end{proof}

Fact 2. bounds the increments $G_i-G_{i-1}$. The proof relies on the following lemma which is a consequence of the coupling result Lemma~\ref{lem:DMPS18}.
Define $g_{n-1}=g_{n-1,\pi}=f$ and, for any $i\in [0,n-2]$, let $g_i$ and $g_{i,\pi}$ denote the functions defined for any $\chunk[0]{x}[i]\in\sX^{i+1}$ by 
\begin{equation}
\label{eq:definition-g-i}
g_i(\chunk[0]{x}[i])=\Ebb_{x_i}[f(\chunk[0]{x}[i],\chunk[1]{X}[n-1-i])],\qquad g_{i,\pi}(\chunk[0]{x}[i])=\Ebb_{\pi}[f(\chunk[0]{x}[i],\chunk[1]{X}[n-1-i])]\eqsp.
\end{equation}

\begin{lemma}
\label{lem:gi-gipi}
 Assume {\bf H1}, {\bf H2}, {\bf H3}. 
 For any $i\in \{0,\ldots,n-1\}$ and $(\chunk[0]{x}[i-1],x_i)$ in $\sX^i\times\cC$,
\begin{equation}\label{eq:gi-gipi}
|g_i(\chunk[0]{x}[i])-g_{i,\pi}(\chunk[0]{x}[i])|\leqslant 2L\sum_{j=i+1}^{n-1}c_jr^{j-i}\eqsp.
\end{equation}
\end{lemma}
\begin{proof}
 Fix $i\in\{0,\ldots,n-1\}$ and $\chunk[0]{x}[i]\in \sX^{i+1}$.
As $f\in\functionboundeddiff{\sX^n}{c}$, the function $\tilde{f}_i:\chunk[1]{y}[n-1-i]\in \sX^{n-1-i}\mapsto f(\chunk[0]{x}[i],\chunk[1]{y}[n-1-i])\in \R$ satisfies
\[
|\tilde{f}_i(\chunk[1]{y}[n-1-i])-\tilde{f}_i(\chunk[1]{z}[n-1-i])|\leqslant \sum_{k=1}^{n-1-i}c_{i+k}\indiacc{y_k\ne z_k}\eqsp.
\]
Hence, $\tilde{f}_i\in\bB\Dbb(\sX^{n-1-i},c_{i+1:n-1})$. Applying Lemma~\ref{lem:DMPS18} to the function $h=\tilde{f}_i$ yields
\begin{align*}
 |g_i(\chunk[0]{x}[i])-g_{i,\pi}(\chunk[0]{x}[i])|&=|\Ebb_{x_i}[f(\chunk[0]{x}[i],\chunk[1]{X}[n-1-i])]-\Ebb_{\pi}[f(\chunk[0]{x}[i],\chunk[1]{X}[n-1-i])]|\\
 &=|\Ebb_{x_i}[\tilde{f}_i(\chunk[1]{X}[n-1-i])]-\Ebb_{\pi}[\tilde{f}_i(\chunk[1]{X}[n-1-i])]| \leqslant 2\sum_{j=i+1}^{n-1}c_j \tvdist{\delta_{x_i}P^j}{\pi}\eqsp.
\end{align*}
Inequality \eqref{eq:gi-gipi} follows from {\bf H3}.
\end{proof}

\noindent
{\bf Fact 2.} \emph{Let $\rho$ such that $r\leqslant \rho<1$ and $i\in\{1,\ldots,n-1\}$.
Then,}
\begin{align}
\label{eq:Gi-Gi-1} |G_i-G_{i-1}|&\leqslant C_1\|c\|_{\infty}\indiacc{\tau_{\cC}^{i-1}=i-1}\sigma_{\cC}\circ\theta^{i-1}\eqsp,\\
\label{eq:Gi-Gi-1Sq}  |G_i-G_{i-1}|^2&\leqslant C_2\indiacc{\tau_{\cC}^{i-1}=i-1}\frac1{\rho^{2\sigma_{\cC}\circ\theta^{i-1}}}\sum_{k=i}^{n-1}c_k^2\rho^{k-i}\eqsp.
\end{align}
\emph{where, $C_1=5L/(1-r)$ and $C_2=16L^2/(1-\rho)$.}
\begin{proof}[Proof of Fact 2.]
For any integer $i\in\{1,\ldots,n\}$, let
\[
G_{i,1}=\Ebb_x[f(\chunk[0]{X}[n-1])|\cF_{\tau_{\cC}^{i-1}}]\indiacc{\tau_{\cC}^{i-1}=i-1},\qquad G_{i,2}=\Ebb_x[f(\chunk[0]{X}[n-1])|\cF_{\tau_{\cC}^{i}}]\indiacc{\tau_{\cC}^{i-1}=i-1}\eqsp.
\]
From Fact 1., $G_i-G_{i-1}=G_{i,2}-G_{i,1}$.
By Markov's property, for any $i\in \{0,\ldots,n-1\}$ and  $x\in \sX$,
\[
\Ebb_x[f(\chunk[0]{X}[n-1])|\cF_i]=g_i(X_{0:i}),\qquad \Pbb_{x}-\text{a.s.}\eqsp.
\]
Now, let $R_{i,1}=g_{i-1}(\chunk[0]{X}[i-1])\indiacc{\tau_{\cC}^{i-1}=i-1}-g_{i-1,\pi}(\chunk[0]{X}[i-1])\indiacc{\tau_{\cC}^{i-1}=i-1}$.
We have
\begin{align}
\notag G_{i,1}&=\Ebb_x[f(\chunk[0]{X}[n-1])|\cF_{\tau_{\cC}^{i-1}}]\indiacc{\tau_{\cC}^{i-1}=i-1}=\Ebb_x[f(\chunk[0]{X}[n-1])|\cF_{i-1}]\indiacc{\tau_{\cC}^{i-1}=i-1}\\
\label{eq:Gi1} &=g_{i-1}(\chunk[0]{X}[i-1])\indiacc{\tau_{\cC}^{i-1}=i-1}=g_{i-1,\pi}(\chunk[0]{X}[i-1])\indiacc{\tau_{\cC}^{i-1}=i-1}+R_{i,1}\eqsp.
\end{align}
Moreover, as $\tau_{\cC}^i\geqslant i$, by \eqref{eq:Gitaui=j},
\begin{align}
\notag G_{i,2}&=\sum_{j\geqslant i}\Ebb_x[f(\chunk[0]{X}[n-1])|\cF_{\tau_{\cC}^{i}}]\indiacc{\tau_{\cC}^{i-1}=i-1}\indiacc{\tau_{\cC}^i=j}\\
\label{eq:Gi2} &=\sum_{j=i}^{n-2}g_j(\chunk[0]{X}[j])\indiacc{\tau_{\cC}^{i-1}=i-1,\tau_{\cC}^i=j}+f(\chunk[0]{X}[n-1])\indiacc{\tau_{\cC}^{i-1}=i-1,\tau_{\cC}^i\geqslant n-1}\eqsp.
\end{align}
Let $R_{i,2}=\sum_{j=i}^{n-2}(g_j(\chunk[0]{X}[j])-g_{j,\pi}(\chunk[0]{X}[j]))\indiacc{\tau_{\cC}^{i-1}=i-1,\tau_{\cC}^i=j}$.
From \eqref{eq:Gi1} and \eqref{eq:Gi2},
\begin{align}
\label{eq:Gi1-Gi2} |G_{i,2}-G_{i,1}|=|&R_{i,2}-R_{i,1}+\sum_{j=i}^{n-2}(g_{j,\pi}(\chunk[0]{X}[j])-g_{i-1,\pi}(\chunk[0]{X}[i-1]))\indiacc{\tau_{\cC}^{i-1}=i-1,\tau_{\cC}^i=j}\\
\notag &+(f(\chunk[0]{X}[n-1])-g_{i-1,\pi}(\chunk[0]{X}[i-1]))\indiacc{\tau_{\cC}^{i-1}=i-1,\tau_{\cC}^i\geqslant n-1}|
\eqsp.
\end{align}
We bound separately all the terms in this decomposition.
First, as $\pi$ is invariant and $f\in \functionboundeddiff{\sX^n}{c}$, for any $j\in \{i+1,\ldots,n-1\}$ and any $\chunk[0]{x}[j]\in \sX^{j+1}$,
\[
|g_{j,\pi}(\chunk[0]{x}[j])-g_{i-1,\pi}(\chunk[0]{x}[i-1])|=\Ebb_{\pi}[f(\chunk[0]{x}[j],\chunk[j+1]{X}[n-1])-f(\chunk[0]{x}[i-1],\chunk[i]{X}[n-1])]\leqslant \sum_{k=i}^jc_k\eqsp.
\]
Hence,
\begin{equation}
\label{eq:Bound:Easyterms}
\begin{aligned} \sum_{j=i}^{n-2}|(g_{j,\pi}(\chunk[0]{X}[j])-g_{i-1,\pi}(\chunk[0]{X}[i-1]))|\indiacc{\tau_{\cC}^i=j}&\leqslant \sum_{j=i}^{n-2}\indiacc{\tau_{\cC}^i=j}\sum_{k=i}^jc_k=\indiacc{\tau_{\cC}^i\leqslant n-2}\sum_{k=i}^{\tau_{\cC}^i}c_k\eqsp,\\
|f(\chunk[0]{X}[n-1])-g_{i-1,\pi}(\chunk[0]{X}[i-1])|\indiacc{\tau_{\cC}^i\geqslant n-1}&\leqslant \indiacc{\tau_{\cC}^i\geqslant n-1}\sum_{k=i}^{n-1}c_k\eqsp.
\end{aligned}
\end{equation}

To bound $|R_{i,1}|$ and $|R_{i,2}|$ in \eqref{eq:Gi1-Gi2}, we use Lemma~\ref{lem:gi-gipi}.
First, \eqref{eq:gi-gipi} directly yields
\begin{equation}\label{eq:Bound:R1}
|R_{i,1}|\leqslant 2\indiacc{\tau_{\cC}^{i-1}=i-1}L\sum_{j=i+1}^{n-1}c_jr^{j-i}\eqsp.
\end{equation}
Moreover, as $\{\tau_{\cC}^i=j\}\subset \{X_j\in \cC\}$, \eqref{eq:gi-gipi} also yields
\[
(g_j(\chunk[0]{X}[j])-g_{j,\pi}(\chunk[0]{X}[j]))\indiacc{\tau_{\cC}^i=j}\leqslant 2L\sum_{k=j+1}^{n-1}c_kr^{k-j}\indiacc{\tau_{\cC}^i=j}\leqslant 2L\indiacc{\tau_{\cC}^i=j}\sum_{k=\tau_{\cC}^i+1}^{n-1}c_kr^{k-\tau_{\cC}^i}\eqsp.
\]
Therefore,
\begin{equation}\label{eq:Bound:R2}
|R_{i,2}|\leqslant 2L\indiacc{\tau_{\cC}^{i-1}=i-1}\sum_{k=\tau_{\cC}^i+1}^{n-1}c_kr^{k-\tau_{\cC}^i}\eqsp.
\end{equation}
Plugging \eqref{eq:Bound:Easyterms}, \eqref{eq:Bound:R1} and \eqref{eq:Bound:R2} in \eqref{eq:Gi1-Gi2} yields
\begin{align}
\label{eq:Gi1-Gi22} |G_{i,2}-G_{i,1}|\leqslant &2L\bigg(\sum_{j=i+1}^{n-1}c_jr^{j-i}+\sum_{k=\tau_{\cC}^i+1}^{n-1}c_kr^{k-\tau_{\cC}^i}+\frac1{2L}\sum_{k=i}^{\tau_{\cC}^i\wedge (n-1)}c_k\bigg)\indiacc{\tau_{\cC}^{i-1}=i-1}\eqsp.
\end{align}
Both \eqref{eq:Gi-Gi-1} and \eqref{eq:Gi-Gi-1Sq} follow from \eqref{eq:Gi1-Gi22} by bounding separately the $3$ terms in the right-hand side of this inequality.
Let us first establish \eqref{eq:Gi-Gi-1}.
Since $r<1$,
\[
\sum_{j=i+1}^{n-1}c_jr^{j-i}\leqslant \frac{\|c\|_\infty r}{1-r},\qquad \sum_{k=\tau_{\cC}^i+1}^{n-1}c_kr^{k-\tau_{\cC}^i}\leqslant \frac{\|c\|_\infty r}{1-r}\eqsp.
\]
Moreover,
\[
\sum_{k=i}^{\tau_{\cC}^i\wedge (n-1)}c_k\leqslant \|c\|_{\infty}[1-i+\tau_{\cC}^i\wedge (n-1) ]\leqslant \|c\|_{\infty}[1+\tau_{\cC}^0\circ \theta^i]=\|c\|_{\infty}\sigma_{\cC}\circ\theta^{i-1}\eqsp.
\]
As $r<1\leqslant \sigma_{\cC}\circ\theta^{i-1}$, plugging these upper bounds in \eqref{eq:Gi1-Gi22} shows
\[
|G_{i}-G_{i-1}|=|G_{i,2}-G_{i,1}|\leqslant \frac{5L\|c\|_{\infty}}{1-r}\sigma_{\cC}\circ\theta^{i-1}\indiacc{\tau_{\cC}^{i-1}=i-1}\eqsp.
\]
This proves \eqref{eq:Gi-Gi-1}.
We use slightly different controls to prove \eqref{eq:Gi-Gi-1Sq} from \eqref{eq:Gi1-Gi22}.
As $r\leqslant \rho<1$, $\rho^{-\sigma_{\cC}\circ\theta^{i-1}}\geqslant 1$, and
\begin{align}
\label{eq:Step1Var} \sum_{j=i+1}^{n-1}c_jr^{j-i}\leqslant \sum_{j=i}^{n-1}c_j\rho^{j-i}\leqslant \rho^{-\sigma_{\cC}\circ\theta^{i-1}}\sum_{j=i}^{n-1}c_j\rho^{j-i}\eqsp.
\end{align}
Moreover,
\begin{align*}
 \sum_{k=\tau_{\cC}^i+1}^{n-1}c_kr^{k-\tau_{\cC}^i}\leqslant\rho^{i-\tau_{\cC}^i}\sum_{k=\tau_{\cC}^i+1}^{n-1}c_k\rho^{k-i}\eqsp.
\end{align*}
As $\tau_{\cC}^i\geqslant i$ and $i-\tau_{\cC}^i=1-\sigma_{\cC}\circ\theta^{i-1}$,
\begin{equation}\label{eq:Step2Var}
\sum_{k=\tau_{\cC}^i+1}^{n-1}c_kr^{k-\tau_{\cC}^i}\leqslant \rho^{1-\sigma_{\cC}\circ\theta^{i-1}} \sum_{j=\tau^i_{\cC}+1}^{n-1}c_j\rho^{j-i}\leqslant \rho^{-\sigma_{\cC}\circ\theta^{i-1}}\sum_{j=\tau^i_{\cC}+1}^{n-1}c_j\rho^{j-i}\eqsp.
\end{equation}
In addition,
\begin{align}\label{eq:Step3Var}
 \sum_{k=i}^{\tau_{\cC}^i\wedge (n-1)}c_k\leqslant &\sum_{k=i}^{\tau_{\cC}^i\wedge (n-1)}c_k\rho^{k-\tau_{\cC}^i}=\sum_{k=i}^{\tau_{\cC}^i\wedge (n-1)}c_k\rho^{k-i-\sigma_{\cC}\circ\theta^{i-1}+1}\leqslant \rho^{-\sigma_{\cC}\circ\theta^{i-1}}\sum_{k=i}^{\tau_{\cC}^i\wedge (n-1)}c_k\rho^{k-i}\eqsp.
\end{align}
Plugging \eqref{eq:Step1Var}, \eqref{eq:Step2Var} and \eqref{eq:Step3Var} in \eqref{eq:Gi1-Gi22} and applying Cauchy-Schwarz inequality shows
\begin{align*}
 |G_{i}-G_{i-1}|^2=&|G_{i,2}-G_{i,1}|^2\leqslant16L^2\rho^{-2\sigma_{\cC}\circ\theta^{i-1}}\bigg(\sum_{k=i}^{n-1}c_k\rho^{k-i}\bigg)^2\indiacc{\tau_{\cC}^{i-1}=i-1}\\
 \leqslant &\frac{16L^2}{1-\rho}\rho^{-2\sigma_{\cC}\circ\theta^{i-1}}\sum_{k=i}^{n-1}c_k^2\rho^{k-i}\indiacc{\tau_{\cC}^{i-1}=i-1}\eqsp.
\end{align*}
This proves \eqref{eq:Gi-Gi-1Sq} and thus {\bf Fact 2.}
\end{proof}

\noindent
{\bf Fact 3.} \emph{ Assume {\bf H1}, {\bf H2}, {\bf H3}. For any $x\in \cC$,}
\begin{align}
\Ebb_x\bigg[\rme^{f(\chunk[0]{X}[n-1])-\Ebb_x[f(\chunk[0]{X}[n-1])]}\bigg]\leqslant \rme^{C_3\|c\|^2}\eqsp.
\end{align}
\emph{where $C_3=\fracaa{4L\left(\fracaa{5}{\log u}+4ML\right)}{(1-r\vee u^{-1/4})^2}$.}
\begin{proof}[Proof of Fact 3.]
For any $t\in \Rbb$, $\rme^t\leqslant 1+t+t^2\rme^{|t|}$. Hence, as $\Ebb_{x}[G_{i+1}-G_i|\cF_{\tau_{\cC}^{i}}]=0$, for any $i\geqslant 0$, we have
\begin{align*}
 \Ebb_x[\rme^{G_{i+1}-G_i}|\cF_{\tau_{\cC}^{i}}]\leqslant 1+\Ebb_x[(G_{i+1}-G_i)^2\rme^{|G_{i+1}-G_i|}|\cF_{\tau_{\cC}^{i}}]\eqsp.
\end{align*}
By Fact 2.,
\begin{align*}
 \Ebb_x[\rme^{G_{i+1}-G_i}|\cF_{\tau_{\cC}^{i}}]&\leqslant 1+C_2\sum_{k=i+1}^{n-1}c_k^2\rho^{k-i-1}\indiacc{\tau_{\cC}^{i}=i}\Ebb_x[\rho^{-2\sigma_{\cC}\circ\theta^{i}}\rme^{C_1\|c\|_{\infty}\sigma_{\cC}\circ\theta^{i}}|\cF_{\tau_{\cC}^{i}}]\eqsp.
 \end{align*}
 Now by Markov's property,
\begin{align*}
\indiacc{\tau_{\cC}^{i}=i} \Ebb_x[\rho^{-2\sigma_{\cC}\circ\theta^{i}}\rme^{C_1\|c\|_{\infty}\sigma_{\cC}\circ\theta^{i}}|\cF_{\tau_{\cC}^{i}}]&=\indiacc{\tau_{\cC}^{i}=i} \Ebb_x[\rho^{-2\sigma_{\cC}\circ\theta^{i}}\rme^{C_1\|c\|_{\infty}\sigma_{\cC}\circ\theta^{i}}|\cF_{i}]\\
&=\indiacc{\tau_{\cC}^{i}=i}\Ebb_{X_i}[\rho^{-2\sigma_{\cC}}\rme^{C_1\|c\|_{\infty}\sigma_{\cC}}]\eqsp.
\end{align*}
Hence,
 \begin{align*}
 \Ebb_x[\rme^{G_{i+1}-G_i}|\cF_{\tau_{\cC}^{i}}]
 &=1+C_2\sum_{k=i+1}^{n-1}c_k^2\rho^{k-i-1}\indiacc{\tau_{\cC}^{i}=i}\Ebb_{X_i}[\rho^{-2\sigma_{\cC}}\rme^{C_1\|c\|_{\infty}\sigma_{\cC}}]\eqsp.
 \end{align*}
Let $\rho=r\vee u^{-1/4}$, $\varepsilon=\log u/(2C_1)$ and assume first that $\|c\|_{\infty}\leqslant \varepsilon$. By {\bf H$2$},
\[
\indiacc{\tau_{\cC}^{i}=i}\Ebb_{X_i}[\rho^{-2\sigma_{\cC}}\rme^{C_1\|c\|_{\infty}\sigma_{\cC}}]\leqslant \indiacc{\tau_{\cC}^{i}=i}\sup_{x\in\cC}\Ebb_x[\rho^{-2\sigma_{\cC}}\rme^{C_1\|c\|_{\infty}\sigma_{\cC}}]\leqslant\sup_{x\in\cC}\Ebb_x[u^{\sigma_{\cC}}]\leqslant M\enspace.
\]
Hence,
\[
 \Ebb_x[\rme^{G_{i+1}-G_i}|\cF_{\tau_{\cC}^{i}}]\leqslant 1+C_2M\sum_{k=i+1}^{n-1}c_k^2\rho^{k-i-1}\leqslant \rme^{C_2M\sum_{k=i+1}^{n-1}c_k^2\rho^{k-i-1}}\eqsp.
\]
By recurrence, it follows that
\begin{align*}
\Ebb_x\bigg[\rme^{f(\chunk[0]{X}[n-1])-\Ebb_x[f(\chunk[0]{X}[n-1])]}\bigg]&\leqslant \rme^{C_2M\sum_{i=0}^{n-2}\sum_{k=i+1}^{n-1}c_k^2\rho^{k-i-1}}\\
&=\rme^{C_2M\sum_{k=1}^{n-1}c_k^2\sum_{i=0}^{k-1}\rho^{k-i-1}}\leqslant \rme^{\frac{C_2M}{1-\rho}\|c\|^2} \eqsp.
\end{align*}
Fix $\tilde{x}$ in $\sX$ and let $\tilde{f} : \sX^n \rightarrow \Rbb$ be defined, for any $x_{0:n-1}$ in $\sX^n$, by
\[
\tilde{f}(\chunk[0]{X}[n-1])
=
f(x_0\mathbbm{1}_{\{c_{0}\leq\varepsilon\}} + \tilde{x} \mathbbm{1}_{\{c_{0}>\varepsilon\}},\dots, x_{n-1}\mathbbm{1}_{\{c_{n-1}\leq\varepsilon\}} + \tilde{x} \mathbbm{1}_{\{c_{n-1}>\varepsilon\}})
\eqsp .
\]
As $f$ belongs to $\mathbb{BD}\left(\sX^n,c\right)$, $\tilde{f}$ belongs to $ \mathbb{BD}\left(\sX^n,\tilde{c}\right)$, where
\[
\tilde{c}
=
\left( c_0\mathbbm{1}_{\{c_{0}\leq\varepsilon\}},\dots,c_{n-1}\mathbbm{1}_{\{c_{n-1}\leq\varepsilon\}}\right)
\eqsp .
\]
Since $\|\tilde{c}\|_{\infty} < \varepsilon$ and $\|\tilde{c}\|\leqslant \|c\|$, $\tilde{f}$ satisfies
\begin{align}\label{ineq:max-gamma-leq-varepsilon-ftilde}
\Ebb_{x}\left[\rme^{\tilde{f}(\chunk[0]{X}[n-1])-\Ebb_{x}[\tilde{f}(\chunk[0]{X}[n-1])]}\right]
& \leqslant
\rme^{\frac{MC_2}{1-\rho}
\| \tilde{c}\|^2} \leqslant
\rme^{\frac{MC_2}{1-\rho}
\| c\|^2}
\eqsp .
\end{align}
Furthermore, by definition of $\tilde{f}$ and since $f$ is in $\mathbb{BD}(\sX^n,c)$, for any $x\in\sX^n$,
\begin{align}\label{ineq:difference-f-tildef}
|f(x)-\tilde{f}(x)|
& =
\sum_{i=0}^{n-1} c_i \mathbbm{1}_{\{c_i>\varepsilon\}}
\leq
\sum_{i=0}^{n-1} c_i \frac{c_i}{\varepsilon}
\leq
\frac{\|c\|^2}{\varepsilon}.
\end{align}
This implies
\[
\Ebb_x\bigg[\rme^{f(\chunk[0]{X}[n-1])-\Ebb_x[f(\chunk[0]{X}[n-1])]}\bigg]\leqslant \rme^{\frac{2\|c\|^2}{\varepsilon}}\Ebb_{x}\left[\rme^{\tilde{f}(\chunk[0]{X}[n-1])-\Ebb_{x}[\tilde{f}(\chunk[0]{X}[n-1])]}\right]\leqslant \rme^{\bigg(\frac2{\varepsilon}+\frac{MC_2}{1-\rho}\bigg)
\| c\|^2}
\eqsp .
\]
This shows {\bf Fact 3} since
\[
\frac2{\varepsilon}+\frac{MC_2}{1-\rho}\leqslant \frac{4L}{(1-r\vee u^{-1/4})^2}\bigg(\frac5{\log u}+4ML\bigg)\eqsp.
\]
\end{proof}

\noindent
{\bf Fact 3} proves that there exists a constant $C=2C_3$ such that, for any $c\in \Rbb^n$, $f\in \functionboundeddiff{\sX^n}{c}$ and $x\in \cC$,
\begin{equation}\label{ControlLaplace}
 \Ebb_x\bigg[\rme^{f(\chunk[0]{X}[n-1])-\Ebb_x[f(\chunk[0]{X}[n-1])]}\bigg]\leqslant \rme^{C\| c\|^2/2}
\eqsp .
\end{equation}
Let $f\in \functionboundeddiff{\sX^n}{c}$ and $x\in \cC$. 
For any $s>0$, $sf\in \functionboundeddiff{\sX^n}{c}$.
Hence, from \eqref{ControlLaplace}, for any $s,t>0$,
\begin{align*}
 \Pbb\big(f(\chunk[0]{X}[n-1])-\Ebb_x[f(\chunk[0]{X}[n-1])]>t\big)&\leqslant \rme^{-st+\log\Ebb_x\big[\rme^{sf(\chunk[0]{X}[n-1])-\Ebb_x[sf(\chunk[0]{X}[n-1])]}\big]}\\
 &\leqslant \rme^{-st+s^2C\| c\|^2/2}\enspace.
\end{align*}
Choosing $s=t/(C\|c\|^2)$ proves Theorem~\ref{thm:ConcMarkQuant} with
\[
\beta=\frac1{2C}=\frac1{4C_3}=\frac{(1-r\vee u^{-1/4})^2}{16L}\bigg(\frac5{\log u}+4ML\bigg)^{-1}\eqsp.
\]
\end{proof}

\bibliographystyle{plain}
\bibliography{BiblioSource}

\end{document}